\theoremstyle{plain}
\newtheorem{Thm}{Theorem}
\newtheorem{Prop}[Thm]{Proposition}
\newtheorem{Cor}[Thm]{Corollary}
\newtheorem{Lem}[Thm]{Lemma}
\theoremstyle{definition}
\theoremstyle{Remark}
\numberwithin{equation}{section}
\title{On the abundance theorem in the case $\nu=0$}
\author{Yujiro Kawamata}
\begin{document}

\maketitle

\begin{abstract}
We present a short proof of the abundance theorem in the case of 
numerical Kodaira dimension $0$ proved by Nakayama and its log generalizaton.
\end{abstract}

\section{Introduction}

Nakayama \cite{Nakayama} proved the abundance conjecture for a non-minimal 
algebraic variety whose numerical Kodaira dimension is equal to $0$:

\begin{Thm}\label{main}
Let $X$ be a smooth projective variety.
Assume that the function $\dim H^0(X, mK_X + A)$ is bounded when 
$m \to \infty$ for arbitrarily fixed ample divisor $A$.
Then there exists a positive integer $m$ such that 
$H^0(X,mK_X) \ne 0$. 
\end{Thm}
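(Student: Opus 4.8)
\medskip

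\noindent\emph{Proof strategy.}\ The plan is to reduce the statement, via Nakayama's $\sigma$-decomposition and the Albanese map, to the Green--Lazarsfeld--Simpson theorem on cohomology support loci. Both the hypothesis (which says $\kappa_\sigma(X,K_X)=0$; in particular $K_X$ is pseudoeffective) and the conclusion $\kappa(X)\ge 0$ are invariant under birational modifications of the smooth projective variety $X$, so we are free to pass to any convenient model; and if $H^0(X,K_X)\ne 0$ there is nothing to prove, so assume $p_g(X)=0$. Now $\kappa_\sigma(X,K_X)=0$ is equivalent, by Nakayama's work, to the numerical triviality of the positive part $P_\sigma$ of the $\sigma$-decomposition $K_X=P_\sigma+N_\sigma$; hence $[K_X]=[N_\sigma]$ in $N^1(X)_{\mathbb R}$, where $N_\sigma\ge 0$ is an effective $\mathbb{R}$-divisor supported on finitely many prime divisors $D_1,\dots,D_k$. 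Thus $[K_X]$ is a rational class lying in the rational polyhedral cone $\sum_i\mathbb{R}_{\ge 0}[D_i]$, so it is a nonnegative \emph{rational} combination of the $[D_i]$. Clearing denominators, there are an integer $m>0$ and an effective divisor $D$ with $mK_X\equiv D$; equivalently, $L:=\mathcal{O}_X(mK_X-D)$ is a numerically trivial line bundle.

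It now suffices to show that $L$ is torsion in $\mathrm{Pic}(X)$: if $L^{\otimes t}\cong\mathcal{O}_X$, then $tm\,K_X\sim tD\ge 0$, so $\kappa(X)\ge 0$. If $q(X)=0$, then $\mathrm{Pic}^{\tau}(X)$ is a finite group and $L$ is automatically torsion; so assume $q(X)>0$, and let $a\colon X\to A$ be the Albanese morphism, for which $a^{*}\colon\mathrm{Pic}^{0}(A)\xrightarrow{\ \sim\ }\mathrm{Pic}^{0}(X)$ is an isomorphism. After replacing $m$ and $D$ by suitable multiples (to absorb the finite group $\mathrm{Pic}^{\tau}(X)/\mathrm{Pic}^{0}(X)$) we may write $L=a^{*}Q$ with $Q\in\mathrm{Pic}^{0}(A)$, so that $\omega_{X}^{\otimes m}\cong\mathcal{O}_{X}(D)\otimes a^{*}Q$. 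The canonical section of $\mathcal{O}_{X}(D)$ then produces a nonzero element of $H^{0}\!\bigl(X,\omega_{X}^{\otimes m}\otimes a^{*}Q^{-1}\bigr)$.

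Consequently the cohomology support locus
\[
V^{0}_{m}:=\bigl\{\,P\in\mathrm{Pic}^{0}(A)\ :\ H^{0}\!\bigl(X,\omega_{X}^{\otimes m}\otimes a^{*}P\bigr)\ne 0\,\bigr\}
\]
is nonempty. If $V^{0}_{m}$ is a finite union of torsion translates of abelian subvarieties of $\mathrm{Pic}^{0}(A)$, then it contains a torsion point $P_{0}$, say of order $s$; picking $0\ne\tau\in H^{0}(X,\omega_{X}^{\otimes m}\otimes a^{*}P_{0})$, the power $\tau^{\otimes s}$ is a nonzero section of $\omega_{X}^{\otimes ms}\otimes a^{*}(P_{0}^{\otimes s})\cong\omega_{X}^{\otimes ms}$, and therefore $\kappa(X)\ge 0$. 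Note that one never needs to identify $P_{0}$ with $Q^{-1}$: only the nonemptiness of $V^{0}_{m}$ and the existence of a torsion point in it are used.

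The crux of the argument is thus the structural claim about $V^{0}_{m}$. For $m=1$ it is classical: the shape of $V^{0}(\omega_{X})$ is controlled by the Albanese map (Green--Lazarsfeld), and the presence of torsion points is Simpson's theorem. For $m\ge 2$, the analogue for the pushforward of a pluricanonical sheaf along a morphism to an abelian variety is the delicate ingredient; I would obtain it either by citing the corresponding results in the literature, or by reducing to the weight-one case: given a section with divisor $\Gamma\in|\omega_{X}^{\otimes m}\otimes a^{*}P|$, pass to a log resolution of the pair $\bigl(X,\tfrac{m-1}{m}\Gamma\bigr)$ and apply generic vanishing to the resulting adjoint (multiplier-ideal) sheaf, reducing once more to Simpson's theorem. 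I expect this upgrade of the Green--Lazarsfeld--Simpson theorem from $\omega_{X}$ to $\omega_{X}^{\otimes m}$ to be the main obstacle; the rest is a formal consequence of the numerical hypothesis. The logarithmic version of the theorem should follow along the very same lines, with the generic vanishing theorem for log canonical pairs in place of the classical one.
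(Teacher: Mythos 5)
Your first half is correct and coincides with the paper's: the boundedness hypothesis forces $K_X\equiv N_\sigma(K_X)$ (this is Nakayama V.1.11, which the paper reproves via a Nadel‐vanishing argument on a general curve section), rationality of the class gives $mK_X\equiv D\ge 0$, and everything reduces to showing that the numerically trivial line bundle $L=\mathcal{O}_X(mK_X-D)$ is torsion. Your endgame is in fact a pleasant simplification of the paper's: you only need \emph{some} torsion point of the nonempty locus $V^0_m$ and then take $\tau^{\otimes s}$, whereas the paper must argue that the torsion point produced by Simpson's theorem coincides with the class it started from, using the minimality of $m$ and the numerical independence of the components of $N_\sigma(K_X)$.

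The gap is exactly where you predict it, and it is the entire content of the theorem: the structure of $V^0_m$ for $m\ge 2$ is neither proved nor reduced. Your one‐line reduction (log‐resolve $\bigl(X,\tfrac{m-1}{m}\Gamma\bigr)$ and ``apply generic vanishing to the adjoint sheaf'') does not yield what you need: generic vanishing in the Green--Lazarsfeld sense controls the codimension and the subtorus directions of the jumping loci, but the assertion that the translates are by \emph{torsion} points is Simpson's arithmetic theorem, which applies to cohomology of rank‐one local systems on an honest variety, not directly to cohomology with multiplier‐ideal coefficients. To transport Simpson's theorem to your adjoint sheaf you must realize the relevant $H^0$ inside $H^0(Y,\omega_Y\otimes(\text{flat line bundle}))$ for a degree‐$m$ cyclic cover $Y\to X$ branched along the divisor $N\in|m(K_X+L)|$ --- and that covering construction is precisely the paper's proof, so invoking ``the literature'' here imports the argument rather than avoiding it. Moreover, once you pass to the cover you must still descend: $\kappa(Y)\ge 0$ does not imply $\kappa(X)\ge 0$ for a degree‐$m$ cover, so you need either the paper's uniqueness argument for the effective representative of the class of $N$, or your torsion‐point‐and‐power trick applied to a jumping locus defined on $X$ itself (e.g.\ via the eigensheaf decomposition of $f_*\omega_Y$). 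As written, the crux is asserted, not established.
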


Nakayama's result is more general in the sense that the theorem holds
for KLT pairs.
Siu \cite{Siu} proved the same result by analytic method. 

The purpose of this note to present a simplified version of the 
proof.
The main point is to use the numerical version of the 
Zariski decomposition as in \cite{cone} and Simpson's finiteness result
\cite{Simpson}.

We shall also prove a logarithmic generalization of Theorem~\ref{main}
for normal crossing pairs (Theorem~\ref{log}).
We note that the coefficients of the boundary in this case are equal to $1$ 
and the pair is not KLT.

Abundance theorem in the case $\nu=0$ for minimal algebraic varieties 
is already proved in 
\cite{addition} as an application of the additivity theorem of the 
Kodaira dimension for algebraic fiber spaces.

We work over $\mathbf{C}$.
We denote by $\equiv$ and $\sim$ the numerical 
and linear equivalences respectively.

When this paper was posted on the web, 
the author learned from a message by Frederic Campana that the argument using
\cite{Simpson} already appeared in \cite{CPT}.

\section{Numerical Zariski decomposition}

Let $X$ be a smooth projective variety.
Two $\mathbf{R}$ divisors $D$ and $D'$ on $X$ are said to be 
{\em numerically equivalent}, and denoted by $D \equiv D'$,
if $(D \cdot C) = (D' \cdot C)$ for 
all irreducible curves $C$.
The set of all numerical classes of $\mathbf{R}$-divisors form a 
finite dimensional real vector space $N^1(X)$.
The {\em pseudo-effective cone} 
$\text{Pseff}(X)$ is the smallest closed convex cone 
in $N^1(X)$ which contains all the numerical classes of effective divisors.
An $\mathbf{R}$-divisors is said to be {\em pseudo-effective} if its 
numerical class is contained in $\text{Pseff}(X)$.
The {\em movable cone} 
$\text{Mov}(X)$ is the smallest closed convex cone 
in $N^1(X)$ which contains all the numerical classes of effective divisors
whose complete linear systems do not have fixed components.

\begin{Lem}
Let $D$ be an $\mathbf{R}$-divisor and $A$ an ample divisor
on a smooth projective variety $X$.
The following are equivalent:

(1) $D$ is pseudo-effective.

(2) For an aribtrary positive number $\epsilon$, 
there exists an effective $\mathbf{R}$-divisor $D'$ such that 
$D+\epsilon A \equiv D'$.

(3) For an aribtrary positive number $\epsilon$, 
there exists a positive integer $m$ such that 
$H^0(X,\llcorner m(D+\epsilon A) \lrcorner) \ne 0$.
\end{Lem}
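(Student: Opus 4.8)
The plan is to prove the cycle $(1)\Rightarrow(3)\Rightarrow(2)\Rightarrow(1)$; the implications $(3)\Rightarrow(2)$ and $(2)\Rightarrow(1)$ are formal, and essentially all the content sits in $(1)\Rightarrow(3)$.

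For $(2)\Rightarrow(1)$ I would note that the hypothesis places the class $[D]+\epsilon[A]$ in $\text{Pseff}(X)$ for every $\epsilon>0$; since $\text{Pseff}(X)$ is a closed cone, letting $\epsilon\to 0$ gives $[D]\in\text{Pseff}(X)$. For $(3)\Rightarrow(2)$, fix $\epsilon$ and take $m$ as in (3), so there is an effective integral divisor $G\sim\llcorner m(D+\epsilon A)\lrcorner$. Writing $m(D+\epsilon A)=\llcorner m(D+\epsilon A)\lrcorner+\{m(D+\epsilon A)\}$ and observing that the fractional part $\{m(D+\epsilon A)\}$ is an effective $\mathbf{R}$-divisor, one gets $D+\epsilon A\equiv\tfrac1m\bigl(G+\{m(D+\epsilon A)\}\bigr)$, which is effective.

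The heart of the matter is $(1)\Rightarrow(3)$. The key observation is that adding an ample class to a pseudo-effective class lands in the interior of $\text{Pseff}(X)$: for any class $\xi$ of sufficiently small norm, $\epsilon[A]+\xi$ is still ample, hence pseudo-effective, so $[D]+\epsilon[A]+\xi=[D]+(\epsilon[A]+\xi)$ is a sum of two pseudo-effective classes and therefore pseudo-effective; thus a whole neighbourhood of $[D]+\epsilon[A]$ lies in $\text{Pseff}(X)$, i.e. $D+\epsilon A$ is big. I would then conclude using the basic theory of the volume of an $\mathbf{R}$-divisor: bigness gives $\operatorname{vol}(D+\epsilon A)>0$, and since $\operatorname{vol}(D+\epsilon A)=\lim_{m\to\infty} n!\,\dim H^0\bigl(X,\llcorner m(D+\epsilon A)\lrcorner\bigr)/m^n$, we get $H^0(X,\llcorner m(D+\epsilon A)\lrcorner)\neq 0$ for all $m\gg 0$. (Alternatively, avoiding the volume function, one can apply Kodaira's lemma to the big divisor $D+\tfrac{\epsilon}{2}A$ to write it as an ample $\mathbf{Q}$-divisor plus an effective $\mathbf{R}$-divisor, bound $\llcorner m(D+\epsilon A)\lrcorner$ below by $m(D+\epsilon A)-\Delta$ with $\Delta$ the reduced divisor on $\operatorname{Supp}(D)\cup\operatorname{Supp}(A)$, and extract a section once $m$ is large enough that the ample part absorbs $\Delta$.)

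The main obstacle I anticipate is bookkeeping rather than conceptual: statement (3) refers to the specific $\mathbf{R}$-divisor $D$ through the floor $\llcorner m(D+\epsilon A)\lrcorner$ and not merely to its numerical class, so one must make sure the asymptotic estimates invoked genuinely depend only on the numerical class — which is exactly what the numerical invariance and continuity of the volume function guarantee. A secondary point is to handle the rounding cleanly in $(3)\Rightarrow(2)$: the fractional parts that appear are supported on a fixed divisor with coefficients in $[0,1)$ independently of $m$, which is what makes the passage between $\mathbf{R}$-divisors and integral divisors harmless.
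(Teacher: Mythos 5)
Your argument is correct, but it routes the substantive implication differently from the paper. The paper treats $(1)\Leftrightarrow(2)$ and $(3)\Rightarrow(2)$ as immediate and concentrates all the work in $(2)\Rightarrow(3)$: it writes the numerically trivial $\mathbf{R}$-divisor $D+\frac13\epsilon A-D'$ as an $\mathbf{R}$-combination of numerically trivial $\mathbf{Q}$-divisors, splits off an effective $\mathbf{Q}$-part, a small $\mathbf{R}$-part and a $\text{Pic}^0(X)\otimes\mathbf{Q}$-part, absorbs the last two into the remaining two copies of $\frac13\epsilon A$, and then passes to a multiple $m$ to clear denominators and the $\text{Pic}^0$ obstruction --- an elementary, self-contained mechanism for turning a numerical hypothesis into an actual section. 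You instead prove $(1)\Rightarrow(3)$ by noting that $D+\epsilon A$ lies in the interior of $\text{Pseff}(X)$, hence is big, and then invoking the volume function of $\mathbf{R}$-divisors (its numerical invariance, its positivity exactly on the big cone, and the formula $\operatorname{vol}(B)=\lim_m n!\,\dim H^0(X,\llcorner mB\lrcorner)/m^n$), or alternatively Kodaira's lemma. This is valid, and you correctly identify the one genuine danger --- that $\llcorner m(D+\epsilon A)\lrcorner$ depends on the divisor and not merely on its numerical class --- and point to the right fact to dispose of it. The trade-off is that the entire content of the lemma (producing sections from numerical data, i.e.\ handling $\text{Pic}^0(X)$ and torsion in the N\'eron--Severi group) is then outsourced to the quoted theorems on volumes of $\mathbf{R}$-divisors, whose proofs contain essentially the same decomposition the paper carries out by hand; so your route is shorter on the page but rests on heavier standard machinery, while the paper's is self-contained and makes the mechanism explicit.
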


\begin{proof}
The equivalence of (1) and (2) is clear.
Obviously (3) implies (2).
If (2) holds, then we can write 
\[
D+\frac 13\epsilon A - D' = \sum_j d_jD_j
\]
for an effective $\mathbf{R}$-divosor $D'$, 
real numbers $d_j$ which are linearly independent over $\mathbf{Q}$
and $\mathbf{Q}$-divisors $D_j$ such that $D_j \equiv 0$.
Thus we can write
\[
D+\frac 13\epsilon A \sim_{\mathbf{Q}} D_1 + D_2 + L
\]
where $\sim_{\mathbf{Q}}$ denotes the $\mathbf{Q}$-linear equivalence,
for an effective $\mathbf{Q}$-divisor $D_1$, 
a small $\mathbf{R}$-divisor $D_2$
and $L \in \text{Pic}^0(X) \otimes \mathbf{Q}$ such that
$\frac 13\epsilon A + D_2$ and $\frac 13\epsilon A + L$ are 
$\mathbf{Q}$-linearly equivalent to effective $\mathbf{R}$-divisors.
Then there exists a positive integer $m$ such that 
$m(D+\epsilon A)$ is linearly equivalent to an effective $\mathbf{R}$-divisor, 
hence (3).
\end{proof}

Let $D$ be a pseudo-effective $\mathbf{R}$-divisor, and 
$A$ an ample divisor.
We define the {\em numerically fixed part} and the 
{\em numerical base locus} of $D$ by
\[
\begin{split}
&N(D) = \lim_{\epsilon \downarrow 0} 
(\inf \{D' \,\vert\, D+\epsilon A \equiv D' \ge 0\}) \\
&\text{NBs}(D) = \bigcup_{\epsilon > 0} 
(\bigcap \{\text{Supp}(D') \,\vert\, D+\epsilon A \equiv D' \ge 0\}).
\end{split}
\]
They are independent of $A$.
By setting $P(D) = D - N(D)$, we obtain a formula $D = P(D)+N(D)$ 
called the {\em numerical Zariski decomposition} of $D$.

\begin{Lem}
(1) The irreducible components of $N(D)$ are numerically 
independent, i.e., linearly independent in $N^1(X)$.

(2) If $D$ is a $\mathbf{Q}$-divisor and $D \equiv N(D)$, then 
$N(D)$ is also a $\mathbf{Q}$-divisor.

(3) $N(D)=0$ if and only if the numerical class of $D$ is 
contained in $\text{Mov}(X)$.
In this case, $\text{NBs}(D)$ is a countable union of subvarieties of 
codimension at least $2$.
\end{Lem}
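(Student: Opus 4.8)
The plan is to handle the three parts in order: (2) will be a formal consequence of (1), and the one non-formal ingredient is hidden in the second half of (3). Throughout write $N(D)=\sum_{i}a_iE_i$ with $E_i$ prime and $a_i>0$, and for a prime divisor $E$ set $\sigma_E(D)=\lim_{\epsilon\downarrow 0}\inf\{\text{mult}_E(D')\,\vert\,D+\epsilon A\equiv D'\ge 0\}$, so the $E_i$ are exactly the primes with $\sigma_E(D)>0$. I will use the standard facts that for a big divisor $B$ one has $\sigma_E(B)=\inf\{\text{mult}_E(B')\,\vert\,B\equiv B'\ge 0\}$, that only finitely many primes satisfy $\sigma_E(B)>0$ in that case, and that an ample $\mathbf{R}$-divisor is numerically equivalent to an effective $\mathbf{R}$-divisor whose support avoids any prescribed subvariety (take a positive combination of general members of basepoint-free multiples).

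For part (1) I would argue by contradiction from a relation $\sum_{i\in I}c_i[E_i]=0$ in $N^1(X)$ with $(c_i)\ne 0$, the index set $I$ lying among the components of $N(D)$. First reduce to the case that $D$ is big: since $\sigma_{E_i}(D+\epsilon A)\to\sigma_{E_i}(D)>0$ as $\epsilon\downarrow 0$, for one sufficiently small $\epsilon$ all the $E_i$ ($i\in I$) are still components of $N(D+\epsilon A)$, so it is enough to contradict (1) for the big divisor $D+\epsilon A$; replace $D$ by it. For a big divisor, every effective $D'\equiv D$ has $\text{mult}_E(D')\ge\sigma_E(D)$ for every prime $E$. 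Relabel so that $c_1>0$ (negating all $c_i$ if necessary), put $t=\tfrac12\min\{\sigma_{E_i}(D)/c_i\,\vert\,i\in I,\ c_i>0\}>0$, and choose an effective $D'\equiv D$ with $\text{mult}_{E_1}(D')<\sigma_{E_1}(D)+tc_1$. Then $D'-t\sum_{i\in I}c_iE_i$ is still effective — for $c_i>0$, $\text{mult}_{E_i}(D')\ge\sigma_{E_i}(D)\ge 2tc_i$, while the terms with $c_i<0$ only increase multiplicities — is numerically equivalent to $D$, and has $E_1$-multiplicity strictly below $\sigma_{E_1}(D)$, contradicting its definition as an infimum. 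Part (2) is then linear algebra over $\mathbf{Q}$: by (1) the rational, $\mathbf{R}$-linearly independent classes $[E_i]$ span a subspace of $N^1(X)$ cut out by rational equations; the hypothesis $D\equiv N(D)$ puts $[D]=\sum_i a_i[E_i]$ into that subspace, hence $[D]=\sum_i q_i[E_i]$ for some $q_i\in\mathbf{Q}$, and independence forces $a_i=q_i\in\mathbf{Q}$, so $N(D)$ is a $\mathbf{Q}$-divisor.

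For part (3) I would first prove $[D]\in\text{Mov}(X)\Rightarrow N(D)=0$ together with the assertion on $\text{NBs}(D)$. Since the ample cone is open, nonempty and contained in $\text{Mov}(X)$, the class $[D+\epsilon A]$ lies in the interior of $\text{Mov}(X)$; as $\text{Mov}(X)$ is the closure of the convex cone $C$ of finite non-negative combinations of classes of effective divisors with fixed-component-free linear systems, and a convex cone has the same interior as its closure, $[D+\epsilon A]\in C$, say $[D+\epsilon A]=\sum_k c_k[M_k]$ with $c_k>0$ and $|M_k|$ without fixed components. For any prime $E$, a general member $M_k'\in|M_k|$ avoids $E$ (as $\text{Bs}\,|M_k|$ has codimension $\ge 2$), so $\sum_k c_kM_k'\equiv D+\epsilon A$ is effective and avoids $E$. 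Letting $\epsilon$ vary this gives $\sigma_E(D)=0$ for all $E$, i.e. $N(D)=0$; and for each fixed $\epsilon$ it shows $\text{Bs}_\epsilon:=\bigcap\{\text{Supp}(D')\,\vert\,D+\epsilon A\equiv D'\ge 0\}$ contains no prime divisor, so — being closed and contained in the support of one such $D'$ — it is a finite union of subvarieties of codimension $\ge 2$. As $\text{Bs}_\epsilon$ is non-increasing in $\epsilon$ (add an effective divisor $\equiv(\epsilon-\epsilon')A$ missing a prescribed point to a representative of $D+\epsilon'A$), $\text{NBs}(D)=\bigcup_{n\ge 1}\text{Bs}_{1/n}$ is a countable union of such subvarieties.

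Finally, for $N(D)=0\Rightarrow[D]\in\text{Mov}(X)$: because the inner infimum $\sigma_E(D+\epsilon A)$ is non-increasing in $\epsilon$, the coefficient $\sigma_E(D)$ is its supremum over $\epsilon>0$, so $N(D)=0$ forces $\sigma_E(D+\epsilon A)=0$ for every prime $E$ and every $\epsilon>0$; in particular $D+\epsilon A$ is big with vanishing $\sigma$-negative part, equal to its own positive part. One then appeals to the basic property of the numerical Zariski decomposition, as in \cite{cone}, that the positive part of a pseudo-effective divisor always has numerical class in $\text{Mov}(X)$, and lets $\epsilon\downarrow 0$, using that $\text{Mov}(X)$ is closed. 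I expect this last step to be the main obstacle: it is not a formal manipulation but rests on the genuine theorem that a big divisor whose $\sigma$-negative part vanishes is movable, so a self-contained account must either reprove or cite it; everything else — the perturbation in (1), the linear algebra in (2), the reduction to the big case, and the cone and general-member arguments in (3) — is routine once that input is in hand.
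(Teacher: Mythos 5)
Your proofs of (1) and (2) are correct and are precisely the fleshed-out versions of the paper's one-line arguments: for (1) the paper only says that a relation would make $N(D)$ numerically equivalent to a different effective divisor, and your reduction to the big case plus the explicit choice of $t$ supplies the missing details; for (2) the paper's remark that the intersection numbers with curves are rational is the same rational-structure argument you give. For (3) the paper's entire proof is the observation that one may restrict to rational $\epsilon$ — it addresses only the countability and, like you, silently defers the equivalence with $\text{Mov}(X)$ to the literature, so your explicit citation of ``$N(D)=0$ implies movable'' is no worse than the original. The one substantive difference worth flagging is this: you derive the codimension-$\ge 2$ statement for $\text{NBs}(D)$ from the hypothesis $[D]\in\text{Mov}(X)$, so when the lemma is later applied in the Proposition (whose hypothesis is $N(D)=0$) your route to the only clause that is actually used passes through the cited hard direction. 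That detour is avoidable: from $N(D)=0$ alone, given a prime divisor $E$ and $\epsilon>0$, choose $D'\equiv D+\tfrac{\epsilon}{2}A$ effective with $c=\text{mult}_E(D')$ so small that $\tfrac{\epsilon}{2}A+cE$ is still ample (the ample cone is open); then $D'-cE$ plus a general effective representative of $\tfrac{\epsilon}{2}A+cE$ represents $D+\epsilon A$ and omits $E$, so each $\text{Bs}_\epsilon$ contains no divisor and hence has codimension $\ge 2$ without any appeal to the movable cone. With that substitution your write-up covers everything the paper actually uses, and the only external input left is the genuinely nontrivial implication $N(D)=0\Rightarrow[D]\in\text{Mov}(X)$, which, as you say, must indeed be cited or reproved.
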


\begin{proof}
(1) If there is a numerical linear relation, 
then $N(D)$ is numerically equivalent to a different effective 
$\mathbf{R}$-divisor, a contradiction.

(2) The intersection numbers of $N(D)$ with curves are rational numbers, 
hence so are the coefficients of $N(D)$.

(3) We can take the limit $\epsilon \to 0$ for only those $\epsilon$ which are 
rational numbers.
\end{proof}

Let $D$ be a pseudo-effective $\mathbf{R}$-divisor.
Then there are two cases:

\begin{enumerate}
\item $\nu(X,D) = 0$:
The function $\dim H^0(X, \llcorner mD \lrcorner + A)$ 
is bounded when $m \to \infty$ for any ample divisor $A$.

\item $\nu(X,D) > 0$:
There exists an ample divisor $A$ such that the function
$\dim H^0(X, \llcorner mD \lrcorner + A)$ 
is unbounded when $m \to \infty$.
\end{enumerate}

The following proposition is \cite{Nakayama}~Theorem~V.1.11.
We include a proof for the convenience of the reader.

\begin{Prop}
Let $X$ be a smooth projective variety, and 
$D$ a pseudo-effective $\mathbf{R}$-divisor. 
Assume that $D \not\equiv 0$ and $N(D) = 0$.
Then there exist an ample divisor $A$, a positive number $b$ and a positive 
integer $m_0$ such that 
\[
\dim H^0(X, \llcorner mD \lrcorner + A) > bm
\]
for $m \ge m_0$.
\end{Prop}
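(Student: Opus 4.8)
The strategy I would follow is to combine Riemann-Roch with a vanishing theorem, reducing everything to the surface case. Since $N(D)=0$, the numerical base locus $\text{NBs}(D)$ is a countable union of subvarieties of codimension at least two; hence a very general complete intersection surface $S=H_1\cap\dots\cap H_{n-2}$, with the $H_i$ very ample, meets $\text{NBs}(D)$ in dimension $\le0$, so no curve of $S$ lies in $\text{NBs}(D)$. For a curve $C$ not contained in $\text{NBs}(D)$ one can, for every $\epsilon>0$, choose an effective $D'\equiv D+\epsilon A$ with $C\not\subset\text{Supp}(D')$, whence $(D+\epsilon A)\cdot C\ge0$ and so $D\cdot C\ge0$; thus $D|_S$ is nef. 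It is nonzero because a pseudo-effective class which is not numerically trivial has strictly positive intersection with a product of $n-1$ ample classes, and this number equals $(D|_S)\cdot(A|_S)$ for suitable ample $A$. So the task becomes to handle nef nonzero divisors on surfaces, plus a lifting step.

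I would settle $\dim X=2$ first. Here $N(D)=0$ just says $D$ is nef. Let $\Delta$ be the reduced sum of the prime components of $D$, so $\{mD\}\le\Delta$ for all $m$. If $A$ is chosen so that $A-K_X-\Delta$ is ample, then
\[
\llcorner mD\lrcorner+A-K_X=mD+\bigl(A-K_X-\Delta\bigr)+\bigl(\Delta-\{mD\}\bigr)
\]
is a nef divisor plus an ample divisor plus an effective divisor from a bounded family supported on the fixed configuration $\Delta$; by Nakai-Moishezon such a sum is ample, uniformly in $m$, once the ample part is large. Kodaira vanishing gives $h^i(X,\llcorner mD\lrcorner+A)=0$ for $i\ge1$, so $h^0=\chi$, and Riemann-Roch gives $\chi(X,\llcorner mD\lrcorner+A)=\tfrac12(D^2)m^2+\tfrac12\bigl(2D\cdot A-D\cdot K_X\bigr)m+O(1)$ with $O(1)$ uniform; as $D^2\ge0$ and $2D\cdot A-D\cdot K_X\ge D\cdot A>0$, this is $\ge bm$ for $m\gg0$.

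For general $n$ I would induct on $\dim X$, proving simultaneously the assertion and the auxiliary statement $(\ast)$: for $A$ sufficiently ample, $h^i(X,\llcorner mD\lrcorner+A)$ is bounded as $m\to\infty$ for every $i\ge1$. Granting $(\ast)$, $h^0(X,\llcorner mD\lrcorner+A)=\chi(X,\llcorner mD\lrcorner+A)+O(1)$, and Riemann-Roch shows $\chi(X,\llcorner mD\lrcorner+A)$ grows like $c\,m^{\nu}$ with $\nu=\max\{j:D^{j}\not\equiv0\}\ge1$ and $c>0$: for a divisor with $N(D)=0$ the cycle class $D^{\nu}$ is nonzero and pseudo-effective (check on a modification resolving the base loci of $|m(D+\epsilon A)|$, where $D$ pulls back to a nef part plus an exceptional effective part whose cross-terms cancel), hence $D^{\nu}\cdot A^{\,n-\nu}>0$ for $A$ large. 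To prove $(\ast)$, restrict to a general $Y\in|H|$: then $D|_Y$ is pseudo-effective, nonzero, with $N(D|_Y)=0$ (restriction does not increase the numerically fixed part) and $\llcorner mD\lrcorner|_Y=\llcorner m(D|_Y)\lrcorner$, and tensoring $0\to\mathcal O_X(-H)\to\mathcal O_X\to\mathcal O_Y\to0$ with $\mathcal O_X(\llcorner mD\lrcorner+A)$ bounds $h^i(X,\llcorner mD\lrcorner+A)$ by $h^i(X,\llcorner mD\lrcorner+A-H)+h^i(Y,\llcorner m(D|_Y)\lrcorner+A|_Y)$; the second term is bounded by $(\ast)$ in dimension $n-1$ (at the surface level, by Kodaira vanishing, using that the negative part of the Zariski decomposition of $\llcorner mD\lrcorner+B-K_Y$ stays bounded when $D|_Y$ is nef), and one descends the twist $A-H,A-2H,\dots$ to a fixed ample value, collecting only finitely many bounded contributions. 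Reading the same restriction sequence for $h^0$ then yields the inductive step for the main bound.

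The hardest part is $(\ast)$, the uniform control of the higher cohomology of $\llcorner mD\lrcorner+A$. This is exactly where $N(D)=0$ is indispensable: for $\dim X\ge3$ the divisor $D$ need not become nef on any modification, so Kodaira vanishing cannot be applied on $X$ itself, and the argument must be routed through general complete intersection surfaces, on which $D$ does restrict to a nef divisor because $\text{NBs}(D)$ has codimension $\ge2$, all while tracking the cohomology of the intermediate restrictions. Once $(\ast)$ is in hand, the Riemann-Roch estimate, the reduction to surfaces, and the choice of $A$ are routine.
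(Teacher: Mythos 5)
Your surface case is correct, and the observation that $D$ restricts to a nef class on a very general complete-intersection surface is sound. But the inductive step for $n\ge 3$ has two genuine gaps, both located exactly at the points where $N(D)=0$ fails to give nefness. First, the Riemann--Roch estimate requires the leading coefficient of $\chi(X,\llcorner mD\lrcorner+A)$ in $m$ to be positive, i.e.\ $D^{\nu}\cdot A^{n-\nu}>0$ for $\nu=\max\{k: D^k\not\equiv 0\}$; for a movable but non-nef class there is no reason for the cycle class $D^{\nu}$ to be pseudo-effective, and your parenthetical justification (``the cross-terms cancel'' on a resolution of the base loci) is not a proof --- the pushed-forward cross terms $\mu_*(P^jN^{\nu-j})$ have no sign. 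This is precisely the delicate point in Nakayama's theory: the well-behaved object is the positive product $\langle D^{\nu}\rangle$, which is \emph{not} what Riemann--Roch computes, so a priori $\chi(X,\llcorner mD\lrcorner+A)$ could tend to $-\infty$ and the method yields nothing. Second, the auxiliary statement $(\ast)$ is not established by your descent: the chain $A, A-H, A-2H,\dots$ does not terminate after a number of steps independent of $m$ (to make the tail term $h^i(X,\llcorner mD\lrcorner+A-NH)$ vanish or stay bounded you need $N$ to grow with $m$), so the restriction sequences give at best $O(m)$ per cohomological degree, not $O(1)$, and the induction does not close.

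The paper sidesteps all of this by restricting further, to a very general complete-intersection \emph{curve} $C$: since $\text{NBs}(D)$ is a countable union of subvarieties of codimension at least $2$, such a $C$ misses it entirely, and $(D\cdot C)>0$ because $D$ is pseudo-effective and $D\not\equiv 0$. Only linear growth is claimed, so it suffices to prove that $H^0(X,\llcorner mD\lrcorner+A)\to H^0(C,(\llcorner mD\lrcorner+A)|_C)$ is surjective for large $m$; this is done by blowing up along $C$, choosing an effective $B_m\equiv mD+\epsilon A$ not containing $C$ such that the pair is KLT near the exceptional divisor (possible exactly because $C$ avoids $\text{NBs}(D)$, which keeps the multiplicities of $B_m$ small along $C$), and applying Nadel vanishing to $\mu^*L_m-E$ twisted by the multiplier ideal of $\mu^*B_m$, which is trivial near $E$. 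No Riemann--Roch, no induction on dimension, and no control of higher cohomology on $X$ is needed. To salvage your approach you would have to descend to curves rather than surfaces and replace Kodaira vanishing by Nadel vanishing on the blow-up; as written, the argument has real gaps in dimension at least $3$.
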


\begin{proof}
Since $\text{NBs}(D)$ is a countable union of closed 
subvarieties of codimension at least $2$, 
a general curve section $C$ does not meet $\text{NBs}(D)$.
Since $D \not\equiv 0$, we have $(D \cdot C) > 0$.

We fix an ample divisor $A$ and denote $L_m = \llcorner mD \lrcorner + A$.
It is sufficient to prove that the natural homomorphism 
$H^0(X, L_m) \to H^0(C, L_m \vert_C)$ is surjective for $m$ large.

Let $\mu: Y \to X$ be the blowing up along $C$ and $E$ the exceptional divisor.
We take an effective $\mathbf{R}$-divisor 
$B_m \equiv mD+ \epsilon A$ such that 
$C \not\subset B_m$ and that $(Y, \mu^*B_m)$ is KLT near $E$.
We calculate
\[
\begin{split}
&\mu^*L_m - E - (K_Y+\mu^*B_m) \\
&= \mu^*(\llcorner mD \lrcorner + A - (K_X+B_m)) - (n-1)E \\
&\equiv \mu^*((1-\epsilon)A - \langle mD \rangle - K_X) - (n-1)E
\end{split}
\]
where $n=\dim X$ and $\langle mD \rangle = mD - \llcorner mD \lrcorner$.
It is ample for any $m > 0$ if $A$ is sufficiently large compared to the 
irreducible components of $D$, $K_X$ and $E$. 

Let $I$ be the multiplier ideal sheaf for the pair $(Y, \mu^*B_m)$.
We have $E \cap \text{Supp}(\mathcal{O}_Y/I) = \emptyset$.
By the Nadel vanishing theorem, we have $H^1(Y, I(\mu^*L_m - E)) = 0$. 
It follows that the homomorphism 
$H^0(Y, \mu^*L_m) \to H^0(E, \mu^*L_m \vert_E)$ is surjective, and 
our assertion is proved.
\end{proof}

\begin{Cor}
Let $X$ be a smooth projective variety, and 
$D$ a pseudo-effective $\mathbf{R}$-divisor. 
Assume that $\dim H^0(X, \llcorner mD \lrcorner + A)$ is bounded.
Then $D$ is numerically equivalent to an effective $\mathbf{R}$-divisor $N(D)$.
\end{Cor}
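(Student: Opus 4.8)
The plan is to prove the stronger statement that the positive part $P(D)=D-N(D)$ of the numerical Zariski decomposition is numerically trivial. Since $N(D)$ is by construction an effective $\mathbf{R}$-divisor, $P(D)\equiv 0$ immediately yields $D\equiv N(D)$, which is what is asserted. Two elementary features of the decomposition will be used. First, $N(D)\ge 0$, so that $P(D)\le D$ as $\mathbf{R}$-divisors and hence $\llcorner mP(D)\lrcorner\le\llcorner mD\lrcorner$ for every integer $m\ge 0$ by monotonicity of the round-down; consequently, for any ample divisor $A$ there is a natural inclusion of sheaves $\mathcal{O}_X(\llcorner mP(D)\lrcorner+A)\hookrightarrow\mathcal{O}_X(\llcorner mD\lrcorner+A)$, and therefore
\[
\dim H^0(X,\llcorner mP(D)\lrcorner+A)\le\dim H^0(X,\llcorner mD\lrcorner+A)\qquad(m\ge 0).
\]
Second, $P(D)$ is again pseudo-effective and satisfies $N(P(D))=0$, i.e.\ its numerical class lies in $\text{Mov}(X)$.

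Granting this, I would argue by contradiction. Suppose $P(D)\not\equiv 0$. Since $P(D)$ is pseudo-effective with $N(P(D))=0$, the Proposition applies to $P(D)$ and produces an ample divisor $A$, a positive number $b$ and an integer $m_0$ with $\dim H^0(X,\llcorner mP(D)\lrcorner+A)>bm$ for all $m\ge m_0$. Combining this with the displayed inequality gives $\dim H^0(X,\llcorner mD\lrcorner+A)>bm$ for $m\ge m_0$, so that $\dim H^0(X,\llcorner mD\lrcorner+A)$ is unbounded for this particular ample divisor $A$. This contradicts the hypothesis, which — in the notation of the dichotomy preceding the Proposition — says precisely that we are in the case $\nu(X,D)=0$ and hence that $\dim H^0(X,\llcorner mD\lrcorner+A)$ is bounded for every ample divisor $A$. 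Thus $P(D)\equiv 0$, and $D\equiv N(D)$ as claimed.

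The step that genuinely needs to be checked is the claim $N(P(D))=0$, i.e.\ that subtracting off the numerically fixed part lands us in the regime where the Proposition is available; this is the expected idempotency of the operator $D\mapsto N(D)$. I expect to verify it by unwinding the definition $N(D)=\lim_{\epsilon\downarrow 0}\inf\{D'\mid D+\epsilon A\equiv D'\ge 0\}$: if $D+\epsilon A\equiv D'\ge 0$ then $P(D)+\epsilon A\equiv D'-N(D)$, and one checks that $D'$ can be chosen so that $D'-N(D)\ge 0$ with the corresponding infima tending to $0$ as $\epsilon\downarrow 0$, using that the representatives may be taken above the relevant infima. The remaining points — monotonicity of $\llcorner\,\cdot\,\lrcorner$, the inclusion on global sections, and the effectivity of $N(D)$ — are routine and already implicit in the construction recalled above.
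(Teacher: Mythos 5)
Your argument is correct and is essentially the deduction the paper intends (the Corollary is stated without proof, immediately after the Proposition): apply the Proposition to $P(D)=D-N(D)$, using $N(P(D))=0$ and the inclusion $H^0(X,\llcorner mP(D)\lrcorner+A)\subset H^0(X,\llcorner mD\lrcorner+A)$ coming from $N(D)\ge 0$, to contradict boundedness unless $P(D)\equiv 0$. The idempotency $N(P(D))=0$ that you single out is indeed the only non-routine ingredient; it is a standard property of the numerical (sigma-)decomposition, proved in Nakayama's book cited by the paper, and your sketch of it is along the right lines.
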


When $D = K_X$, we denote $\nu(X) = \nu(X,K_X)$.
The Kodaira dimension $\kappa(X)$ is a birational invariant.
Its numerical version $\nu(X)$ is also a birational invariant: 
if $X$ and $X'$ are birationally equivalent 
smooth projective varieties, then $\nu(X) = 0$ if and only if $\nu(X')=0$.
For more precise definition of the numerical Kodaira dimension $\nu(X)$,
we refer the reader to \cite{Nakayama}.

\section{Proof of the theorem}

\begin{proof}[Proof of Theorem~\ref{main}]
By assumption, $K_X$ is numerically equivalent to an effective 
$\mathbf{Q}$-divisor.
We take the smallest possible positive integer $m$ such that 
$m(K_X+L)$ for some $L \in \text{Pic}^{\tau}(X)$
is linearly equivalent to an effective divisor $N$.
We shall prove that $L$ is a torsion in $\text{Pic}^{\tau}(X)$.

By blowing up $X$ further, 
we may assume that the support of $N$ is a normal crossing divisor.
We take a holomorphic section $h$ of $\mathcal{O}_X(m(K_X+L))$ 
such that $\text{div}(h) = N$.
By taking the $m$-th root of $h$, we construct a finite and surjective 
morphism $\pi: Y' \to X$ from a normal variety with only 
rational singularities.
Let $\mu: Y \to Y'$ be a desingularization.
We have
\[
\mu_*K_Y \sim \pi^*(K_X+(m-1)(K_X+L) - N')
\]
for some $\mathbf{Q}$-divisor $N'$ such that $0 \le N' \le N$.
Then 
\[
\mu_*K_Y+\pi^*L \sim \pi^*(m(K_X+L) - N') \sim \pi^*(N-N').
\]
Since $Y'$ has only rational singularities, it follows that 
$H^0(Y,K_Y + \mu^*\pi^*L) \ne 0$.

By \cite{Simpson}, it follows that 
there exists a torsion element $L' \in \text{Pic}^{\tau}(Y)$
such that $H^0(Y,K_Y +L') \ne 0$.
If $L' \not\sim \mu^*\pi^*L$, then $\pi^*(N-N')$ is numerically equivalent 
to a different effective divisor.
Then $N$ must be numerically equivalent to a different effective 
$\mathbf{Q}$-divisor, a contradiction.
Therefore $L' \sim \mu^*\pi^*L$, and $L$ is a torsion.
\end{proof}

We prove a logarithmic version:

\begin{Thm}\label{log}
Let $X$ be a smooth projective variety and $D = \sum_i D_i$ a simple 
normal crossing divisor.
Assume that $\dim H^0(X, m(K_X+D) + A)$ is bounded when $m \to \infty$
for any fixed ample divisor $A$.
Then there exists a positive integer $m$ such that 
$H^0(X,m(K_X+D)) \ne 0$. 
\end{Thm}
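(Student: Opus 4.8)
The plan is to run the proof of Theorem~\ref{main} essentially verbatim with $K_X$ replaced by $K_X+D$, the one genuinely new ingredient being a \emph{logarithmic} form of Simpson's finiteness result. Since $K_X+D$ is pseudo-effective and $\dim H^0(X,m(K_X+D)+A)$ is bounded, the Corollary together with the Lemmas of Section~2 shows that $K_X+D$ is numerically equivalent to an effective $\mathbf{Q}$-divisor $N_0=N(K_X+D)$; because $\nu(X,K_X+D)=0$, this $N_0$ is moreover the \emph{unique} effective $\mathbf{R}$-divisor in its numerical class. As in the proof of Theorem~\ref{main}, I choose the smallest positive integer $m$ such that $m(K_X+D+L)\sim N$ for an effective integral divisor $N$ and some $L\in\text{Pic}^{\tau}(X)$; by the uniqueness just noted $N=mN_0$. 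It suffices to prove that $L$ is torsion, since if $\ell L\sim 0$ then $\ell m(K_X+D)\sim\ell N\ge 0$.

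After a log resolution of $(X,D+N)$ I may assume that $\text{Supp}(N)+D$ is simple normal crossing: one blows up strata, replaces $D$ by the reduced total transform $\tilde D$ (so that $K_{\tilde X}+\tilde D=\rho^*(K_X+D)+E$ with $E\ge 0$ exceptional, valid because $(X,D)$ is log smooth) and $N$ by $\rho^*N+mE$; the final conclusion descends to $X$ because $\rho_*\mathcal{O}(\ell E')=\mathcal{O}_X$ for effective exceptional $E'$ and because $L$ is torsion on $X$ iff $\rho^*L$ is torsion on $\tilde X$. Taking then a section $h$ of $\mathcal{O}_X(m(K_X+D+L))$ with $\text{div}(h)=N$ and extracting its $m$-th root produces the cyclic cover $\pi\colon Y'\to X$, which is normal with rational (indeed quotient) singularities and whose dualizing sheaf satisfies $\pi_*\omega_{Y'}=\bigoplus_{j=0}^{m-1}\omega_X\otimes\mathcal{M}^{j}(-\llcorner jN/m\lrcorner)$ with $\mathcal{M}=\mathcal{O}_X(K_X+D+L)$. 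Tensoring by $\mathcal{O}_X(D)$ and reading off the $j=m-1$ eigensheaf — using $\mathcal{M}^{m-1}\cong\mathcal{O}_X(N-K_X-D-L)$ and $N-\llcorner(m-1)N/m\lrcorner=\lceil N/m\rceil=:N'$ with $0\le N'\le N$ — exhibits $\mathcal{O}_X(N'-L)$ as a direct summand of $\pi_*(\omega_{Y'}\otimes\pi^*\mathcal{O}_X(D))$, whence $H^0(Y',\omega_{Y'}\otimes\pi^*\mathcal{O}_X(D)\otimes\pi^*\mathcal{O}_X(L))\ne 0$ because it contains $H^0(X,\mathcal{O}_X(N'))\ne 0$. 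Pulling this up along a log resolution $\mu\colon Y\to Y'$ of $(Y',\pi^{-1}(D)_{\mathrm{red}})$, with reduced boundary $D_Y$ chosen so that $K_Y+D_Y=\mu^*(K_{Y'}+\pi^{-1}(D)_{\mathrm{red}})+(\text{effective exceptional})$ and using that $\mu_*$ annihilates effective exceptional divisors, gives $H^0(Y,K_Y+D_Y+\mu^*\pi^*L)\ne 0$ on the log smooth projective pair $(Y,D_Y)$.

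Now I would invoke the logarithmic version of Simpson's theorem \cite{Simpson}: the locus $\{L'\in\text{Pic}^{\tau}(Y)\mid H^0(Y,K_Y+D_Y+L')\ne 0\}$ is a finite union of torsion translates of subtori — by Serre and Poincaré duality for the pair this is the finiteness statement for the support locus of $H^{\dim Y}$ of rank-one unitary local systems on the quasi-projective variety $Y\setminus D_Y$ — so, being non-empty, it contains a torsion point $L'$. To finish, exactly as in Theorem~\ref{main}: the classes $K_Y+D_Y+\mu^*\pi^*L$ and $K_Y+D_Y+L'$ are both numerically equivalent to $K_Y+D_Y$, which has $\nu=0$ (since $\pi$ is finite, $\mu$ is birational and $N=mN_0$ has $\nu=0$), so the effective divisors cut out by the two nonzero sections above each equal the unique effective divisor $N(K_Y+D_Y)$ in that numerical class; hence $\mu^*\pi^*L\sim L'$ is torsion, and therefore so is $L$.

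The step I expect to be the main obstacle is the logarithmic Simpson input, i.e. controlling the cohomology support locus of $\omega_Y(D_Y)$ twisted by rank-one unitary local systems when the complement $Y\setminus D_Y$ is only quasi-projective rather than projective — together with the bookkeeping that makes the cyclic cover and the two log resolutions for the non-klt pair $(X,D)$ land precisely on $K_Y+D_Y$ twisted by a pulled-back torsion class. A minor subtlety is that $\pi$ may ramify along components of $D$, so one must check that the eigensheaf computation and the identity $\mu_*\mathcal{O}_Y(K_Y+D_Y)=\mathcal{O}_{Y'}(K_{Y'}+\pi^{-1}(D)_{\mathrm{red}})$ still hold; they do, since all the sheaves involved are reflexive and agree in codimension one.
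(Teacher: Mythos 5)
Your overall strategy coincides with the paper's: reduce to showing that the flat line bundle $L$ with $m(K_X+D+L)\sim N$ is torsion, pass to a cyclic cover branched along $N$ and a log resolution to produce a nonzero section of $K_Y+E+\mu^*\pi^*L$ for a log smooth pair $(Y,E)$, invoke a Simpson-type finiteness statement to find a torsion $L'$ with $H^0(Y,K_Y+E+L')\ne 0$, and conclude $L'\sim\mu^*\pi^*L$ from the uniqueness of the effective divisor in a numerical class with $\nu=0$. The reduction steps and the final uniqueness argument are essentially those of the paper (modulo the usual bookkeeping with $N'$ and with components of $N$ lying in $D$, which the paper also treats only briefly).

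The genuine gap is exactly the step you yourself flag as ``the main obstacle'': you invoke a ``logarithmic version of Simpson's theorem'' for the locus $\{L'\in\text{Pic}^{\tau}(Y)\mid H^0(Y,K_Y+D_Y+L')\ne 0\}$ without proving it, and your one-line justification (Serre/Poincar\'e duality reducing it to a support locus of $H^{\dim Y}$ of rank-one unitary local systems on the quasi-projective $Y\setminus D_Y$) is not a proof. Simpson's theorem in \cite{Simpson} applies to subvarieties of the moduli space of rank-one local systems that are \emph{canonically defined}, i.e.\ cut out compatibly in the Betti, de Rham and Dolbeault realizations; for cohomology of the open variety $Y\setminus E$ this compatibility is not automatic, and establishing it is precisely the new content of the paper's proof. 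The paper does this by writing down, in each of the three realizations, the Deligne spectral sequence
\[
E_1^{p,q}=H^{2p+q}(E^{[-p]},V)\Rightarrow H^{p+q}(Y\setminus E,V)
\]
coming from the canonical filtration on $Ri_*\mathbf{Q}_{Y\setminus E}$ (respectively the pole-order filtration on $\Omega_Y^{\bullet}(\log E)$ and its Higgs analogue), and checking via \cite{Deligne} that these spectral sequences are compatible with the comparison isomorphisms on the compact strata $E^{[p]}$; only then can Simpson's result be applied to conclude that the jumping loci are finite unions of torsion translates of subtori. Until this (or an equivalent argument) is supplied, the proof is incomplete. A second, smaller point: where $\pi$ ramifies along a component of $D$, the sheaves $\pi^*\mathcal{O}_X(D)$ and $\mathcal{O}_{Y'}((\pi^{-1}D)_{\mathrm{red}})$ do \emph{not} agree in codimension one, so your parenthetical justification of the eigensheaf identity is wrong as stated; the identity can be repaired, but only after modifying $N'$, as the paper indicates.
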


\begin{proof}
The proof is parallel to the non-log case.
We have $m(K_X+D+L) \sim N$ as before.
We make the union of $N$ and $D$ to be normal crossing by blowing up $X$, 
and take a ramified covering $\pi: Y' \to X$ branching along $N$.
By resolution, we obtain a smooth 
projective variety $Y$ with a simple normal crossing divisor $E$ which is 
the union of the preimage of $D$ and
the exceptional divisors of the resolution.
We note that common irreducible components of $N$ and $D$ do not cause
any trouble for the formula
\[
\mu_*(K_Y+E) \sim \pi^*(K_X+D+(m-1)(K_X+D+L) - N')
\]
though we have to modify $N'$.
We have $\mu_*(K_Y+E) + \pi^*L \sim \pi^*(N-N')$ as before.
We have to prove that $\pi^*L$ is torsion. 

In the moduli space of local systems $V$ of rank $1$ on $X$, we consider 
the closed subvarieties where the dimensions of the cohomology groups 
\[
H^p_B(Y \setminus E,V) = H^p(Y \setminus E, \mu^*\pi^*V)
\]
jump.
Let $i: Y \setminus E \to Y$ be an open immersion, and denote
by $E^{[p]}$ the disjoint union of all the $p$-fold intersections 
of the irreducible components of $E$ as in \cite{Deligne}.
We have $E^{[0]} = Y$ by convention.
Then the canonical filtration on the complex $Ri_*\mathbf{Q}_{Y \setminus E}$
induces a spectral sequence among Betti cohomologies
\[
E_1^{p,q} = H^{2p+q}_B(E^{[-p]}, V) \Rightarrow H_B^{p+q}(Y \setminus E, V)
\]
where we denote $H^q_B(E^{[p]}, V) = H^q(E^{[p]}, \mu^*\pi^*V)$.
We denote furthermore
\[
\begin{split}
&H^q_{DR}(E^{[p]}, V) = 
\mathbf{H}^q(E^{[p]}, \mu^*\pi^*V \otimes \Omega_{E^{[p]}}^{\bullet}) \\
&H^q_{DR}(Y \setminus E, V) = 
\mathbf{H}^q(Y, \mu^*\pi^*V \otimes \Omega_Y^{\bullet}(\log E)) \\
&H^q_{\text{Dol}}(E^{[p]}, V) = 
\mathbf{H}^q(E^{[p]}, (\mu^*\pi^*V \otimes \Omega_{E^{[p]}}^{\bullet}, \phi)) 
\\
&H^q_{\text{Dol}}(Y \setminus E, V) = 
\mathbf{H}^q(Y, (\mu^*\pi^*V \otimes \Omega_Y^{\bullet}(\log E), \phi))
\end{split}
\]
where $\phi \in H^0(X, \Omega^1_X)$ is the Higgs field corresponding to the
flat connection on $V$.
The filtration with respect to the orders of log poles on the complex 
$\Omega_Y^{\bullet}(\log E)$ induces spectral sequences 
\[
\begin{split}
&E_1^{p,q} = H^{2p+q}_{DR}(E^{[-p]}, V)
\Rightarrow H^{p+q}_{DR}(Y \setminus E, V) \\
&E_1^{p,q} = H^{2p+q}_{\text{Dol}}(E^{[-p]}, V)
\Rightarrow H^{p+q}_{\text{Dol}}(Y \setminus E, V)
\end{split}
\]
By \cite{Deligne}, these spectral sequences are compatible with the 
isomorphisms 
\[
H^q_B(E^{[p]}, V) \cong H^q_{DR}(E^{[p]}, V) 
\cong H^q_{\text{Dol}}(E^{[p]}, V)
\]
so that we have isomorphisms 
\[
H^q_B(Y \setminus E, V) \cong H^q_{DR}(Y \setminus E, V) 
\cong H^q_{\text{Dol}}(Y \setminus E, V).
\]
Thus our jumping loci are canonically defined in the sense of 
Simpson \cite{Simpson}.
We apply Simpson's result, and deduce 
that there is a torsion element $L' \in \text{Pic}^{\tau}(X)$ such that 
$H^0(Y, L' \otimes \mathcal{O}_Y(K_Y+E)) \ne 0$.
The rest of the proof is the same as in Theorem~\ref{main}.
\end{proof}

\section{Addendum}

After the first version of this paper is written,
the author received a paper \cite{COP}.
Then the author realized that a more precise calculation on the 
construction of this paper yields a more general result for 
LC pairs as follows.
The proof is added for the sake of completeness though
it is basically the same.

\begin{Thm}
Let $(X,B)$ be a projective pair with log canonical (LC) 
singularities.
Assume that $H^0(X,m(K_X+B+L)) \ne 0$ for a positive 
integer $m$ and $L \in \text{Pic}^{\tau}(X)$.
Then there exists a positive integer $m'$ 
such that $H^0(X,m'(K_X+B)) \ne 0$.  
\end{Thm}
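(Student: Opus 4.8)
The plan is to follow the proofs of Theorems~\ref{main} and \ref{log} almost verbatim; the only genuinely new feature is that the boundary now carries coefficients in $(0,1]$ rather than all being equal to $1$, and this forces a more careful bookkeeping in the covering construction and in the descent at the end. First I would reduce to the log smooth case: writing $m(K_X+B+L)\sim N$ with $N\ge0$ and taking a log resolution $f\colon X'\to X$ of $(X,B+\frac1mN)$, one has $K_{X'}+B'=f^*(K_X+B)+E'$ with $B'\ge0$ supported on the total transform, all coefficients of $B'$ in $(0,1]$ (log canonicity is preserved), $E'\ge0$ exceptional, and $B'$, $f^*N$ and the exceptional locus all simple normal crossing; then $m(K_{X'}+B'+f^*L)\sim f^*N+mE'\ge0$, and $f_*\mathcal O_{X'}(m''(K_{X'}+B'))=\mathcal O_X(m''(K_X+B))$ for $m''$ divisible enough (since $E'$ is exceptional), so it suffices to produce a nonzero section of some $m''(K_{X'}+B')$. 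Thus I may assume $X$ smooth, $B=\sum b_iB_i$ simple normal crossing with $b_i\in(0,1]$, and $B+N$ simple normal crossing.

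Next, exactly as in Theorems~\ref{main} and \ref{log}, I would pick $h\in H^0(X,\mathcal O_X(m(K_X+B+L)))$ with $\text{div}(h)=N$, form the $m$-th root cover $\pi\colon Y'\to X$ (finite surjective from a normal variety with only quotient, hence rational, singularities), and resolve $\mu\colon Y\to Y'$. Let $E\subset Y$ be the reduced simple normal crossing divisor consisting of the strict transform of the reduced preimage $B^\flat$ of $\text{Supp}(B)$ together with all $\mu$-exceptional divisors, so that $\mathcal O_Y(K_Y+E)=\Omega^{\dim Y}_Y(\log E)$. The content of the ``more precise calculation'' is the adjunction computation through $\pi$ and $\mu$: carrying the ramification of $\pi$ and the discrepancies of $\mu$ one gets, for a suitable resolution,
\[
\mu_*\mathcal O_Y(K_Y+E)\cong\mathcal O_{Y'}(K_{Y'}+B^\flat),\qquad K_{Y'}+B^\flat\sim\pi^*\big(m(K_X+B+L)-L-N''\big)+F,
\]
where $0\le N''\le N$ is a $\mathbf{Q}$-divisor absorbing the fractional parts of the ramification orders and $F\ge0$ records the gap $B^\flat-\pi^*B$ between the reduced preimage of $\text{Supp}(B)$ and the pullback of the $\mathbf{Q}$-divisor $B$ (in Theorems~\ref{main} and \ref{log} one has $F=0$). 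Since $m(K_X+B+L)\sim N$ and $Y'$ has rational singularities, this produces a nonzero section of $\mathcal O_Y(K_Y+E)\otimes\mu^*\pi^*L$; in particular the rank-one local system on $X$ determined by $L$ lies in the locus where $H^0$ of $\Omega^{\dim Y}_Y(\log E)$ twisted by $\mu^*\pi^*(-)$ is nonzero.

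I would then run the Hodge-theoretic part of Theorem~\ref{log} unchanged: through the spectral sequences attached to the filtration by order of log poles and to the stratification $E^{[\bullet]}$, the Betti, de~Rham and Dolbeault realisations of $H^*(Y\setminus E,\mu^*\pi^*V)$ are canonically identified, so the jumping locus above is canonically defined in the sense of Simpson, and by Simpson's finiteness theorem \cite{Simpson} it is a finite union of torsion translates of subtori; in particular it contains a torsion point, so there is a torsion $L'\in\text{Pic}^\tau(X)$ with $H^0(Y,\mathcal O_Y(K_Y+E)\otimes\mu^*\pi^*L')\ne0$. Writing $t$ for the order of $L'$ and $m'$ for a multiple of $t$ clearing the denominators of $N''$ and $F$, one gets $H^0(Y,m'(K_Y+E))\ne0$; pushing this section down along $\mu$ (rational singularities give $\mu_*\mathcal O_Y(m'(K_Y+E))=\mathcal O_{Y'}(m'(K_{Y'}+B^\flat))$) and then along the finite morphism $\pi$, one aims to identify in $\pi_*\mathcal O_{Y'}(m'(K_{Y'}+B^\flat))$ a graded summand of the form $\mathcal O_X(m'(K_X+B))\otimes M$ with $M$ a torsion line bundle on $X$; this gives $H^0(X,m'(K_X+B)\otimes M)\ne0$, and replacing $m'$ by a further multiple to kill $M$ finishes.

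I expect the main obstacle to be exactly this last step. Unlike in Theorem~\ref{main}, where the hypothesis $\nu=0$ supplied the numerical rigidity of the fixed part $N$ needed to force $L'\sim\mu^*\pi^*L$, here there is no such rigidity, so one must descend the torsion twist directly; and for this it is essential that some graded summand of the pushforward be a multiple of $K_X+B$ up to a torsion class, rather than a multiple of $K_X+\lceil B\rceil$ with the coefficients rounded up, which is what the naive reduced preimage $B^\flat$ tends to produce. Getting the fractional parts to cancel correctly --- possibly by precomposing with an auxiliary Kawamata-type cover that absorbs the denominators of $B$ so that $B$ becomes reduced upstairs --- is the delicate point; once it is settled, the rest of the argument is identical to Theorems~\ref{main} and \ref{log}.
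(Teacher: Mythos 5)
Your outline reproduces the architecture of the paper's argument (reduction to a log smooth model, cyclic cover, resolution, Simpson's theorem applied to a canonically defined jumping locus, descent), but the step you yourself flag as ``the delicate point'' is a genuine gap, and it is precisely the one place where the paper's proof does something new. If you take the $m$-th root of $N$ and put $E = $ (reduced preimage of $\mathrm{Supp}(B)$) $+$ (exceptional divisors), the eigensheaf decomposition of $f_*\mathcal{O}_Y(K_Y+E)$ produces summands of the form $\mathcal{O}_X(K_X+\ulcorner B\urcorner + i(K_X+B+L) - \llcorner iN/m\lrcorner)$; for $i=m-1$ this is $m(K_X+B) + (\ulcorner B\urcorner - B) + (m-1)L - (\text{effective})$, and the correction term $\ulcorner B\urcorner - B$ is \emph{nonnegative}, i.e.\ you only obtain sections of a bundle \emph{larger} than $m(K_X+B)$ up to torsion. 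This is exactly the ``rounded-up coefficients'' obstruction you describe, and no amount of pushing the section down along $\pi$ repairs it, because the loss already happens in the choice of branch divisor. The auxiliary Kawamata cover you suggest is not what the paper does and would force you to descend Simpson's torsion point through a second covering group, which you have not carried out.

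The paper's device is to change the branch divisor: after arranging that $B$ and $N$ have no common components and that $mB$ is integral, one takes the $m$-th root of $N-mB \sim m(K_X+L)$ rather than of $N$, and sets $E=(f^*\llcorner B\lrcorner)_{\mathrm{red}}$. Then
\[
Rf_*\mathcal{O}_Y(K_Y+E) \cong \bigoplus_{i=0}^{m-1}
\mathcal{O}_X\bigl(K_X+\llcorner B\lrcorner+\ulcorner i(K_X+B+L-N/m)\urcorner\bigr),
\]
and the $i=m-1$ summand is exactly $\mathcal{O}_X(m(K_X+B)+(m-1)L-\llcorner N/m\lrcorner)$: the $-mB$ in the branch divisor converts the round-up of the boundary from $\ulcorner B\urcorner$ into $mB-\llcorner B\lrcorner=\ulcorner(m-1)B\urcorner$, so the fractional parts cancel instead of accumulating, and the only correction is the \emph{subtraction} of the effective divisor $\llcorner N/m\lrcorner$, which is harmless. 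Simpson's theorem is then applied directly to the jumping locus of this single summand (canonically defined, as in your Hodge-theoretic paragraph), yielding a torsion $L'$ with $H^0(X, m(K_X+B)+L'-\llcorner N/m\lrcorner)\ne 0$, after which one clears the torsion by passing to a multiple. No descent of a section along $\pi$ is needed at all, since the relevant eigensheaf already lives on $X$.
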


\begin{proof}
We have $m(K_X+B+L) \sim N$ for an effective divisor $N$ as before.
By a resolution of singularities, we may assume that the support of 
$B+N$ is a normal crossing divisor.
Moreover we may assume that $B$ and $N$ have no common irreducible 
components by subtracting the overlap and multiplying $m$ if necessary.
We denote $B = D+B'$ for $D= \llcorner B \lrcorner$.
Let $\pi: Y' \to X$ be the ramified covering obtained by 
taking the $m$-th root of $N-mB$, and $\mu: Y \to Y'$
a log resolution as before.
Let $f = \pi\mu$.

Let $E = (f^*D)_{\text{red}}$.
Then we have $R\mu_*\mathcal{O}_Y(K_Y+E) \cong 
\mathcal{O}_{Y'}(K_{Y'}+\pi^*D)$ by the vanishing theorem.
We have the following more precise formula:
\[
Rf_*\mathcal{O}_Y(K_Y+E) \cong \bigoplus_{i=0}^{m-1} 
\mathcal{O}_X(K_X+D+\ulcorner i(K_X+B+L-N/m) \urcorner).
\]
Since $H^0(X, \bullet)$ of each term on the right hand side 
is upper semicontinuous, the jumping locus on the moduli space of  
flat line bundles on $X$ for each term is a union of 
torsion translations of triple tori by Simpson's result as before.

If we set $i=m-1$, then we have
\[
K_X+D+\ulcorner (m-1)(K_X+B+L-N/m) \urcorner
= m(K_X+B) + (m-1)L - \llcorner N/m \lrcorner.
\]
Therefore we conclude that there exists a torsion line bundle $L'$
such that $H^0(X, m(K_X+B)+L'- \llcorner N/m \lrcorner) \ne 0$.
\end{proof}

Department of Mathematical Sciences, University of Tokyo,

Komaba, Meguro, Tokyo, 153-8914, Japan

kawamata@ms.u-tokyo.ac.jp

\end{document}